\newtheorem{theorem}{Theorem}
\theoremstyle{plain}
\newtheorem{corollary}{Corollary}
\newtheorem{lemma}{Lemma}
\newtheorem{remark}{Remark}
\numberwithin{equation}{section}
\begin{document}
\title[New sharp Jordan type inequalities]{New sharp Jordan type
inequalities and their applications}
\author{Zhen-Hang Yang}
\address{System Division, Zhejiang Province Electric Power Test and Research
Institute, Hangzhou, Zhejiang, China, 310014}
\email{yzhkm@163.com}
\date{April 10, 2012}
\subjclass[2010]{26D05, 26D15, 33F05, 26E60}
\keywords{Jordan type inequalities, trigonometric functions, sharp bound,
relative error}
\thanks{This paper is in final form and no version of it will be submitted
for publication elsewhere.}

\begin{abstract}
In this paper, we prove that for $x\in \left( 0,\pi /2\right) $ 
\begin{equation*}
\left( \cos p_{0}x\right) ^{1/p_{0}}<\frac{\sin x}{x}<\left( \cos \tfrac{x}{3%
}\right) ^{3}
\end{equation*}%
with the best constants $p_{0}=0.347307245464...$ and $1/3$. Moreover, if $%
p\in $ $(0,1/3]$ then the double inequality 
\begin{equation*}
\beta _{p}\left( \cos px\right) ^{1/p}<\frac{\sin x}{x}<\left( \cos
px\right) ^{1/p}
\end{equation*}%
holds for $x\in \left( 0,\pi /2\right) $, where $\beta _{p}=2\pi ^{-1}\left(
\cos \frac{p\pi }{2}\right) ^{-1/p}$ and $1$ are the best possible. Its
reverse one holds if $p\in \left[ 1/2,1\right] $. As applications, some new
inequalities are established.
\end{abstract}

\maketitle

\section{Introduction}

The classical Jordan's inequality \cite{Mitrinovic.Springer.1970} states
that for $x\in (0,\pi /2]$ 
\begin{equation}
\frac{2}{\pi }\leq \frac{\sin x}{x}\leq 1,  \label{Jordan}
\end{equation}%
with equality holds if and only if $x=\pi /2$.

Some new developments on refinements, generalizations and applications of
Jordan's inequality can be found in \cite{Qi.JIA.2009} and related
references therein.

In \cite{Qi.MIA.2(4)(1999)}, Qi et al. proved that 
\begin{equation}
\cos ^{2}\frac{x}{2}<\frac{\sin x}{x}\text{ }  \label{Qi}
\end{equation}%
holds for $x\in \left( 0,\pi /2\right) $. Recently, Kl\'{e}n et al. \cite[%
Theorem 2.4]{Klen.JIA.2010} showed that the function $p\mapsto \left( \cos
px\right) ^{1/p}$ is decreasing on $\left( 0,1\right) $ and proved that for $%
x\in \left( -\sqrt{27/5},\sqrt{27/5}\right) $ \emph{\ }%
\begin{equation}
\cos ^{2}\frac{x}{2}\leq \frac{\sin x}{x}\leq \cos ^{3}\frac{x}{3}\leq \frac{%
2+\cos x}{3}  \label{Klen}
\end{equation}%
hold.

This paper is motivated by these studies and it is aimed at giving the sharp
bounds $\left( \cos px\right) ^{1/p}$ for $\left( \sin x\right) /x$, that
is, for $x\in \left( 0,\pi /2\right) $, determine the best $p,q\in (0,1]$
such that 
\begin{equation}
\left( \cos px\right) ^{1/p}<\frac{\sin x}{x}<\left( \cos qx\right) ^{1/q}
\label{M}
\end{equation}%
hold, respectively.

Our main results are contained the following two theorems.

\begin{theorem}
\label{Theorem Ma}Let $p,q\in \left( 0,1\right] $. Then the inequalities 
\begin{equation}
\left( \cos px\right) ^{1/p}<\frac{\sin x}{x}<\left( \cos qx\right) ^{1/q}
\label{Ma}
\end{equation}%
hold for $x\in \left( 0,\pi /2\right) $ if and only if $p\in \left[ p_{0},1%
\right] $ and $q\in (0,1/3]$, where $p_{0}=0.347307245464...$.

Moreover, we have%
\begin{eqnarray}
\left( \cos \frac{x}{3}\right) ^{\alpha } &<&\frac{\sin x}{x}<\left( \cos 
\frac{x}{3}\right) ^{3},  \label{Ma1} \\
\left( \cos p_{0}x\right) ^{1/p_{0}} &<&\frac{\sin x}{x}<\left( \cos
p_{0}x\right) ^{1/\left( 3p_{0}^{2}\right) },  \label{Ma2}
\end{eqnarray}%
where the exponents $\alpha =\allowbreak \frac{2\left( \ln \pi -\ln 2\right) 
}{\ln 4-\ln 3}$, $3$, $1/p_{0}$, $1/\left( 3p_{0}^{2}\right) $ are the best
constants.
\end{theorem}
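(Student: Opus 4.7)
The plan is to reduce (\ref{Ma}) to a monotonicity question about the logarithmic quotient
\[
F_p(x)\,=\,\frac{\ln(\sin x/x)}{\ln\cos(px)},\qquad x\in(0,\pi/2),\ p\in(0,1].
\]
Because both numerator and denominator are negative on $(0,\pi/2)$, the left inequality of (\ref{Ma}) is equivalent to $F_p(x)<1/p$ and the right inequality to $F_q(x)>1/q$. Two applications of L'Hospital's rule yield $F_p(0^+)=1/(3p^2)$, while direct evaluation gives $F_p((\pi/2)^-)=\ln(2/\pi)/\ln\cos(p\pi/2)$. The constant $p_0$ is, by construction, the unique root in $(0,1)$ of $(2/\pi)^p=\cos(p\pi/2)$, i.e.\ the value at which $F_p((\pi/2)^-)=1/p$.

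\textbf{Necessity and reduction to endpoints.} Letting $x\to(\pi/2)^-$ in the lower estimate requires $(2/\pi)^p\ge\cos(p\pi/2)$; the auxiliary function $\phi(p)=(2/\pi)^p-\cos(p\pi/2)$ is convex on $[0,1]$ with $\phi(0)=0$, $\phi'(0)<0$, $\phi(1)>0$, so it has a single positive root $p_0$ and is negative on $(0,p_0)$, giving the necessity of $p\ge p_0$. Letting $x\to 0^+$ in the upper estimate requires $1/(3q^2)\ge 1/q$, i.e.\ $q\le 1/3$. For sufficiency, the decreasing monotonicity of $p\mapsto(\cos px)^{1/p}$ on $(0,1)$ cited from Kl\'en et al.\ \cite{Klen.JIA.2010} reduces both parameter families to their extreme cases $p=p_0$ and $q=1/3$; the case $q=1/3$ is precisely the middle inequality of (\ref{Klen}), already known.

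\textbf{Main technical step.} It remains to prove $(\cos p_0 x)^{1/p_0}<\sin x/x$ on $(0,\pi/2)$, equivalently $F_{p_0}(x)<1/p_0$. Since the endpoint limits of $F_{p_0}$ are $1/(3p_0^2)$ and $1/p_0$ (by definition of $p_0$), it suffices to show that $F_{p_0}$ is strictly increasing on $(0,\pi/2)$. I would apply the L'Hospital-type monotonicity rule to $f(x)=\ln\sin x-\ln x$ and $g(x)=\ln\cos(p_0 x)$ (both vanishing at $0$), which reduces the task to proving that
\[
H(x)\,=\,\frac{f'(x)}{g'(x)}\,=\,\frac{1/x-\cot x}{p_0\tan(p_0 x)}
\]
is increasing on $(0,\pi/2)$. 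This purely trigonometric statement is the main obstacle. The plan is to clear denominators, reduce the sign of $H'(x)$ to the positivity of a single smooth function, and verify that positivity by a power-series comparison, using the explicit series for $\sin x-x\cos x$, $x\sin x$, and $\sin(p_0 x)/\cos(p_0 x)$, and appealing to the defining relation $(2/\pi)^{p_0}=\cos(p_0\pi/2)$ for any endpoint check.

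\textbf{Refinements.} Once the monotonicity of $F_{p_0}$ is secured, (\ref{Ma2}) follows directly: the lower bound is $F_{p_0}(x)<F_{p_0}((\pi/2)^-)=1/p_0$ and the upper bound is $F_{p_0}(x)>F_{p_0}(0^+)=1/(3p_0^2)$, both endpoints being attained in the limit, so both exponents are best possible. An entirely analogous monotonicity argument for $F_{1/3}$ (strictly increasing from $3$ to $\alpha=\ln(2/\pi)/\ln\cos(\pi/6)=2(\ln\pi-\ln 2)/(\ln 4-\ln 3)$) yields (\ref{Ma1}) together with the sharpness of both $\alpha$ and $3$.
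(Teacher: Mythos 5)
Your overall architecture coincides with the paper's: the same quotient $F_p(x)=\ln(\sin x/x)/\ln\cos px$, the same endpoint values $F_p(0^+)=1/(3p^2)$ and $F_p((\pi/2)^-)=\ln(2/\pi)/\ln\cos(p\pi/2)$, the same use of the monotone form of L'Hospital's rule to pass from $F_p$ to $f'/g'$, and the same reduction of the two parameter families to the extreme cases $p=p_0$, $q=1/3$ via the monotonicity of $p\mapsto(\cos px)^{1/p}$. Your necessity argument is a harmless and correct variation (convexity of $\phi(p)=(2/\pi)^p-\cos(p\pi/2)$ in place of the paper's appeal to the monotonicity of $p\mapsto p^{-1}\ln\cos(p\pi/2)$), and your endpoint identifications of the four sharp exponents are right.

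The genuine gap is in what you yourself label the main obstacle. Everything hinges on showing that $H(x)=\dfrac{1/x-\cot x}{p\tan(px)}$ is increasing on $(0,\pi/2)$ for $p=p_0$ (and, for (\ref{Ma1}), for $p=1/3$), and your plan --- ``clear denominators, reduce the sign of $H'$ to the positivity of a single smooth function, and verify that positivity by a power-series comparison'' --- is a statement of intent, not an argument. The sign of $H'$ is governed by $(\csc^2x-x^{-2})\tan(px)-p\,(x^{-1}-\cot x)\sec^2(px)$, whose Maclaurin coefficients are convolutions of Bernoulli numbers with no evident fixed sign, so a naive term-by-term comparison does not go through. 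The device that actually closes this step in the paper is to avoid differentiating $H$ altogether: expand numerator and denominator via (\ref{2.1})--(\ref{2.2}) as $\sum a_nx^{2n-1}$ and $\sum b_nx^{2n-1}$ with $a_n=\tfrac{2^{2n}}{(2n)!}|B_{2n}|$ and $b_n=(2^{2n}-1)p^{2n-2}a_n$, and invoke the power-series ratio lemma of Biernacki and Krzyz (Lemma \ref{Lemma B}): the quotient of the two series is monotone as soon as the coefficient ratio is, and
\begin{equation*}
\frac{b_{n+1}}{a_{n+1}}-\frac{b_n}{a_n}=(4^{n+1}-1)\,p^{2n-2}\Bigl(p^2-\tfrac14+\tfrac{3}{4(4^{n+1}-1)}\Bigr)\le 0
\quad\text{exactly when } p^2\le\tfrac15,
\end{equation*}
which covers both $p_0\approx0.347$ and $1/3$ simultaneously. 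Without this lemma (or an equivalent mechanism), the key monotonicity of $F_{p_0}$ and $F_{1/3}$ --- and with it the sufficiency half of the theorem and the sharp inequalities (\ref{Ma1})--(\ref{Ma2}) --- remains unproved in your proposal.
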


\begin{theorem}
\label{Theorem Mb}For $p\in (0,1]$, let $f_{p}$ be the function defined on $%
\left( 0,\pi /2\right) $ by%
\begin{equation}
f_{p}\left( x\right) =\ln \frac{\sin x}{x}-\frac{1}{p}\ln \cos px.
\label{f_p}
\end{equation}%
Then $f_{p}$ is decreasing if $p\in $ $(0,1/3]$ and increasing if $p\in %
\left[ 1/2,1\right] $.

Moreover, if $p\in $ $(0,1/3]$ then 
\begin{equation}
\beta _{p}\left( \tfrac{\pi }{4}\right) \left( \cos px\right) ^{1/p}<\frac{%
\sin x}{x}<\left( \cos px\right) ^{1/p}  \label{Mb}
\end{equation}%
with the best possible constants $\beta _{p}\left( \pi /2\right) =2\pi
^{-1}\left( \cos \frac{p\pi }{2}\right) ^{-1/p}$ and $1$. (\ref{Mb}) is
reversed if $p\in \left[ 1/2,1\right] $.
\end{theorem}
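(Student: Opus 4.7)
\medskip

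\noindent\textbf{Proof plan for Theorem \ref{Theorem Mb}.} The natural approach is to analyze $f_p'$ via its Maclaurin series and then read off both the monotonicity statement and the bounds.

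First I would differentiate to obtain
\[
f_p'(x)=\cot x-\frac{1}{x}+\tan(px),\qquad x\in(0,\pi/2).
\]
Using the classical Bernoulli-number expansions
\[
\cot x-\frac{1}{x}=-\sum_{n=1}^{\infty}\frac{2^{2n}|B_{2n}|}{(2n)!}\,x^{2n-1},\qquad
\tan y=\sum_{n=1}^{\infty}\frac{2^{2n}(2^{2n}-1)|B_{2n}|}{(2n)!}\,y^{2n-1},
\]
valid on $(0,\pi)$ and $(0,\pi/2)$ respectively, substituting $y=px$ and combining gives
\[
f_p'(x)=\sum_{n=1}^{\infty}c_n(p)\,x^{2n-1},\qquad
c_n(p)=\frac{2^{2n}|B_{2n}|}{(2n)!}\bigl[(2^{2n}-1)p^{2n-1}-1\bigr].
\]
The whole theorem then reduces to studying the sign of the bracket $a_n(p):=(2^{2n}-1)p^{2n-1}-1$.

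Next I would treat the two ranges of $p$ separately. For $p\in(0,1/3]$ I would show $a_n(p)\le 0$ for every $n\ge 1$, with strict inequality for some $n$. Since $p\mapsto (2^{2n}-1)p^{2n-1}$ is increasing in $p$, it suffices to do this at $p=1/3$, i.e.\ to prove $2^{2n}-1\le 3^{2n-1}$. This holds with equality at $n=1$ and is strict for $n\ge 2$ because $(4/9)^n\le (4/9)^2<1/3$ gives $2^{2n}\le 3^{2n-1}$. Thus $f_p'(x)<0$ on $(0,\pi/2)$. For $p\in[1/2,1]$ I would check $a_n(p)\ge 0$, which again reduces (by monotonicity in $p$) to $p=1/2$, where $a_n(1/2)=1-2^{1-2n}\ge 1/2>0$. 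Hence $f_p'(x)>0$ on $(0,\pi/2)$. The main (and only real) obstacle is the elementary inequality $2^{2n}-1\le 3^{2n-1}$ at the boundary $p=1/3$; once that is in hand the rest of the sign analysis is automatic.

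Finally I would deduce (\ref{Mb}) from the monotonicity by evaluating endpoint limits. A direct computation gives
\[
\lim_{x\to 0^{+}}f_p(x)=0,\qquad
\lim_{x\to(\pi/2)^{-}}f_p(x)=\ln\frac{2}{\pi}-\frac{1}{p}\ln\cos\frac{p\pi}{2}=\ln\beta_p.
\]
For $p\in(0,1/3]$, monotonic decrease on $(0,\pi/2)$ then yields $\ln\beta_p<f_p(x)<0$, which after exponentiation is exactly the double inequality (\ref{Mb}) with constants $\beta_p$ and $1$. The sharpness of these two constants is immediate: as $x\to 0^+$ the ratio $(\sin x/x)/(\cos px)^{1/p}\to 1$, and as $x\to(\pi/2)^-$ it tends to $\beta_p$, so neither can be improved. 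For $p\in[1/2,1]$, $f_p$ increases, the same endpoint computations apply, and the inequalities in (\ref{Mb}) reverse with the same constants remaining best possible.
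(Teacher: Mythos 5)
Your proposal is correct and follows essentially the same route as the paper: differentiate $f_p$, expand $f_p'$ in the Bernoulli-number series, determine the sign of the coefficients, and then read off (\ref{Mb}) from the endpoint limits $f_p(0^+)=0$ and $f_p(\pi/2^-)=\ln\beta_p$. The only (minor) difference is that you settle the coefficient sign by the elementary boundary check $2^{2n}-1\le 3^{2n-1}$ and $a_n(1/2)\ge 1/2$, whereas the paper proves the auxiliary function $x\mapsto(2^{2x}-1)^{-1/(2x-1)}$ is increasing from $1/3$ to $1/2$; both are valid and yours is slightly more elementary.
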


\section{Lemmas}

\begin{lemma}[\protect\cite{Vamanamurthy.183.1994}, \protect\cite%
{Anderson.New York. 1997}]
\label{Lemma A}Let $f,g:\left[ a,b\right] \mapsto \mathbb{R}$ be two
continuous functions which are differentiable on $\left( a,b\right) $.
Further, let $g^{\prime }\neq 0$ on $\left( a,b\right) $. If $f^{\prime
}/g^{\prime }$ is increasing (or decreasing) on $\left( a,b\right) $, then
so are the functions 
\begin{equation*}
x\mapsto \frac{f\left( x\right) -f\left( b\right) }{g\left( x\right)
-g\left( b\right) }\text{ \ \ \ and \ \ \ }x\mapsto \frac{f\left( x\right)
-f\left( a\right) }{g\left( x\right) -g\left( a\right) }.
\end{equation*}
\end{lemma}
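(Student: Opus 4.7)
The plan is to prove both assertions by direct computation of derivatives combined with Cauchy's mean value theorem. I will focus on the first function $F(x):=\frac{f(x)-f(b)}{g(x)-g(b)}$; the second is entirely symmetric.

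First I would observe that because $g'\neq 0$ on the whole interval $(a,b)$, Darboux's intermediate value theorem for derivatives forces $g'$ to have constant sign. Hence $g$ is strictly monotone on $[a,b]$, which guarantees $g(x)\neq g(b)$ for $x\in(a,b)$ and makes $F$ well-defined and differentiable there. This step is what legitimizes all later manipulations, and is really the only place where the hypothesis on $g'$ is used.

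Next I would differentiate $F$ and rearrange the numerator by pulling out a common factor of $g'(x)(g(x)-g(b))$ to reach the identity
\begin{equation*}
F'(x)=\frac{g'(x)}{g(x)-g(b)}\left[\frac{f'(x)}{g'(x)}-\frac{f(x)-f(b)}{g(x)-g(b)}\right].
\end{equation*}
Cauchy's mean value theorem applied on $[x,b]$ produces $c\in(x,b)$ with $\frac{f(x)-f(b)}{g(x)-g(b)}=\frac{f'(c)}{g'(c)}$, converting the bracket into $\frac{f'(x)}{g'(x)}-\frac{f'(c)}{g'(c)}$.

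Now if $f'/g'$ is increasing, the bracket is negative since $x<c$; and the prefactor $\frac{g'(x)}{g(x)-g(b)}$ is also negative, because whichever sign $g'$ has, strict monotonicity of $g$ makes $g(x)-g(b)$ carry the opposite sign. The product is positive, so $F'>0$ and $F$ is increasing. The decreasing case flips both factors identically. The second function is handled by the same two-line computation, with Cauchy's mean value theorem applied on $[a,x]$ instead. I do not expect a serious obstacle; the only delicate point is the Darboux argument that pins down the sign of $g'$, everything else being a rearrangement.
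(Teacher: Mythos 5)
Your argument is correct and complete: the Darboux-sign observation legitimizes the quotient, the factorization of $F'$ is right, and the Cauchy mean value theorem step together with the sign analysis of the prefactor $g'(x)/(g(x)-g(b))$ (negative in the $b$-anchored case, positive in the $a$-anchored case) yields the stated monotonicity in both cases. The paper itself offers no proof of this lemma --- it is quoted from the cited references of Vamanamurthy--Vuorinen and Anderson--Vamanamurthy--Vuorinen --- and your proof is essentially the standard one given there, so there is nothing to reconcile.
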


\begin{lemma}[\protect\cite{Biernacki.9.1955}]
\label{Lemma B}Let $a_{n}$ and $b_{n}$ $(n=0,1,2,...)$ be real numbers and
let the power series $A\left( t\right) =\sum_{n=1}^{\infty }a_{n}t^{n}$ and $%
B\left( t\right) =\sum_{n=1}^{\infty }b_{n}t^{n}$ be convergent for $|t|<R$.
If $b_{n}>0$ for $n=0,1,2,...$, and $a_{n}/b_{n}$ is strictly increasing (or
decreasing) for $n=0,1,2,...$, then the function $A\left( t\right) /B\left(
t\right) $ is strictly increasing (or decreasing) on $\left( 0,R\right) $.
\end{lemma}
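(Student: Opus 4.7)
The plan is to establish the monotonicity of $f_p$ first and then read off the double inequality (\ref{Mb}) together with its sharpness as a corollary. Direct differentiation gives
\[
f_p'(x) = \cot x - \frac{1}{x} + \tan(px),
\]
which is real-analytic on $(0,\pi/2)$ because, for $p \leq 1$, the pole of $\tan(px)$ sits at $x = \pi/(2p) \geq \pi/2$. The whole theorem therefore reduces to controlling the sign of this derivative on $(0,\pi/2)$ in each of the two ranges of $p$.

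For that I would use the standard Bernoulli-number expansions
\[
\cot x - \frac{1}{x} = -\sum_{n=1}^{\infty} \frac{2^{2n}|B_{2n}|}{(2n)!}\,x^{2n-1}, \qquad \tan(px) = \sum_{n=1}^{\infty} \frac{2^{2n}(2^{2n}-1)|B_{2n}|}{(2n)!}\,p^{2n-1}x^{2n-1},
\]
both valid on $(0,\pi/2)$, to combine them into
\[
f_p'(x) = \sum_{n=1}^{\infty} \frac{2^{2n}|B_{2n}|}{(2n)!}\,c_n(p)\,x^{2n-1}, \qquad c_n(p) := (2^{2n}-1)p^{2n-1} - 1.
\]
Because every prefactor $2^{2n}|B_{2n}|/(2n)!$ is positive, the sign of each term is governed by $c_n(p)$, which is strictly increasing in $p$. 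For $p \in [1/2,1]$ one has $c_n(p) \geq c_n(1/2) = 1 - 2^{1-2n} > 0$ for all $n \geq 1$, forcing $f_p'(x) > 0$. For $p \in (0,1/3]$ one has $c_n(p) \leq c_n(1/3) = (4^n-1)/3^{2n-1} - 1$, which vanishes at $n=1$ and is strictly negative for $n \geq 2$ since $(4^n-1)/3^{2n-1} < 3(4/9)^n \leq 48/81 < 1$; hence every term is $\leq 0$, not all zero, and $f_p'(x) < 0$. This proves the asserted monotonicity in both regimes.

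With monotonicity established, the double inequality (\ref{Mb}) and its sharpness fall out from the boundary limits of $f_p$. By continuity, $f_p(0^+) = 0$ and $f_p((\pi/2)^-) = \ln(2/\pi) - (1/p)\ln\cos(p\pi/2) = \ln\beta_p$. For $p \in (0,1/3]$, the monotone decrease then gives $\ln\beta_p < f_p(x) < 0$ on $(0,\pi/2)$, which exponentiates to (\ref{Mb}); both constants are sharp because they are attained in the limits $x \to (\pi/2)^-$ and $x \to 0^+$ respectively. The case $p \in [1/2,1]$ is completely symmetric and yields the reversed inequality.

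The genuine obstacle is concentrated in the elementary coefficient estimate $(4^n-1)/3^{2n-1} \leq 1$, with equality only at $n=1$; this bound is precisely what pins down $p = 1/3$ as the critical threshold in the theorem and distinguishes the strictly decreasing regime from the borderline case. Everything else — the analogous estimate at $p=1/2$ and the endpoint analysis — is routine once that inequality is in hand.
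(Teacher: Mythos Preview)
Your proposal does not prove the stated Lemma~\ref{Lemma B} at all: Lemma~\ref{Lemma B} is the Biernacki--Krzy\.z power-series monotonicity criterion, quoted from \cite{Biernacki.9.1955} without proof in the paper, and your write-up makes no attempt to establish it. What you have written is instead a complete proof of Theorem~\ref{Theorem Mb}.

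Viewed as a proof of Theorem~\ref{Theorem Mb}, your argument is correct and follows the same overall strategy as the paper: differentiate $f_p$, expand $f_p'(x)=\cot x-\tfrac{1}{x}+\tan(px)$ via the Bernoulli-number series (\ref{2.1})--(\ref{2.2}), and control the sign of each coefficient $(2^{2n}-1)p^{2n-1}-1$. The only substantive difference is in how that sign is pinned down. The paper recasts the condition as $p$ versus $g(n):=(2^{2n}-1)^{-1/(2n-1)}$, extends $g$ to a real variable, and shows by a separate differentiation argument that $g$ is increasing on $(1/2,\infty)$ with $g(1)=1/3$ and $\lim_{n\to\infty}g(n)=1/2$. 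You bypass this auxiliary calculus entirely: you evaluate the coefficient directly at the endpoints $p=1/3$ and $p=1/2$ and dispose of the resulting inequalities $(4^n-1)/3^{2n-1}\le 1$ and $1-2^{1-2n}>0$ by elementary bounds. Your route is shorter and self-contained; the paper's route makes it transparent \emph{why} $1/3$ and $1/2$ are the exact thresholds, since they arise as the extreme values of $g(n)$.
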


\begin{lemma}[{\protect\cite[pp.227-229]{Handbook.math.1979}}]
\label{Lemma C}We have%
\begin{eqnarray}
\cot x &=&\frac{1}{x}-\sum_{n=1}^{\infty }\frac{2^{2n}}{\left( 2n\right) !}%
|B_{2n}|x^{2n-1}\text{, \ }|x|<\pi ,  \label{2.1} \\
\tan x &=&\sum_{n=1}^{\infty }\frac{2^{2n}-1}{\left( 2n\right) !}%
2^{2n}|B_{2n}|x^{2n-1}\text{, \ }|x|<\pi /2,  \label{2.2}
\end{eqnarray}%
where $B_{n}$ is the Bernoulli numbers.
\end{lemma}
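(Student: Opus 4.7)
\medskip

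\noindent\textbf{Proof proposal for Lemma C.} Although the statement is quoted from \cite{Handbook.math.1979}, the plan is to derive both identities from a single source: the defining generating function of the Bernoulli numbers,
\begin{equation*}
\frac{t}{e^{t}-1}=\sum_{n=0}^{\infty}\frac{B_{n}}{n!}t^{n},\qquad |t|<2\pi .
\end{equation*}
The first move is to obtain (2.1). Writing $\cot x=i(e^{2ix}+1)/(e^{2ix}-1)$ and using $e^{2ix}+1=(e^{2ix}-1)+2$ gives the compact identity
\begin{equation*}
x\cot x=ix+\frac{2ix}{e^{2ix}-1}.
\end{equation*}
Setting $t=2ix$ in the Bernoulli generating series then produces a power series in $x$ whose $n=0$ term is $1$, whose $n=1$ term equals $B_{1}(2ix)=-ix$ (since $B_{1}=-1/2$) and hence exactly cancels the extra $ix$, and whose odd-index terms of order $\geq 3$ vanish because $B_{2k+1}=0$ for $k\geq 1$.

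The next step is a sign normalization. Using the classical fact $B_{2n}=(-1)^{n+1}|B_{2n}|$, each surviving term becomes
\begin{equation*}
\frac{B_{2n}}{(2n)!}(2ix)^{2n}=\frac{(-1)^{n+1}|B_{2n}|}{(2n)!}(-1)^{n}2^{2n}x^{2n}=-\frac{2^{2n}}{(2n)!}|B_{2n}|x^{2n},
\end{equation*}
so that dividing by $x$ yields exactly (2.1). The domain $|x|<\pi$ is forced by the substitution $|t|=2|x|<2\pi$, consistent with the nearest poles of $\cot x$ at $\pm\pi$.

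For (2.2) I plan to use the duplication identity $\tan x=\cot x-2\cot 2x$, which follows directly from $\sin 2x=2\sin x\cos x$ and $\cos^{2}x-\cos 2x=\sin^{2}x$. Substituting (2.1) at the arguments $x$ and $2x$, the two $1/x$ singularities cancel, and the coefficient of $x^{2n-1}$ becomes
\begin{equation*}
\frac{2^{2n}|B_{2n}|}{(2n)!}\bigl(2\cdot 2^{2n-1}-1\bigr)=\frac{2^{2n}(2^{2n}-1)}{(2n)!}|B_{2n}|,
\end{equation*}
which is (2.2). The convergence radius shrinks to $|x|<\pi/2$ because the series for $\cot 2x$ requires $|2x|<\pi$.

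The only genuinely non-formal ingredient is the sign pattern $B_{2n}=(-1)^{n+1}|B_{2n}|$, which is the main obstacle if one wants a self-contained derivation; it can either be imported as a classical fact or deduced from Euler's product $\sin(\pi z)=\pi z\prod_{k\geq 1}(1-z^{2}/k^{2})$ by logarithmic differentiation, which produces the identity $|B_{2n}|=2(2n)!\,\zeta(2n)/(2\pi)^{2n}>0$ with the correct alternating sign. Everything else reduces to routine rearrangement of absolutely convergent power series.
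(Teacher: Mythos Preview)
Your derivation is correct and entirely standard: the identity $x\cot x=ix+2ix/(e^{2ix}-1)$ combined with the Bernoulli generating series gives (2.1), and the duplication trick $\tan x=\cot x-2\cot 2x$ gives (2.2) with the halved radius. The coefficient and sign computations check out.

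The paper, however, does not prove Lemma~C at all: it is stated with a bare citation to \cite{Handbook.math.1979} and used as a black box in the proofs of Lemma~\ref{Lemma D} and Theorem~\ref{Theorem Mb}. So there is nothing to compare at the level of argument; you have simply supplied a self-contained proof where the paper relies on an external reference. Your approach is the classical one and is exactly what one would expect to find behind the citation.
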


\begin{lemma}
\label{Lemma D}Let $F_{p}$ be the function defined $\left( 0,\pi /2\right) $
by 
\begin{equation}
F_{p}\left( x\right) =\frac{\ln \frac{\sin x}{x}}{\ln \cos px}.  \label{2.3}
\end{equation}%
Then $F_{p}$ is strictly increasing on $\left( 0,\pi /2\right) $ if $p\in (0,%
\sqrt{5}/5]$ and decreasing on $\left( 0,\pi /2\right) $ if $p\in \left[
1/2,1\right] $. Moreover, we have%
\begin{equation}
\tfrac{\ln 2-\ln \pi }{\ln \left( \cos \frac{1}{2}\pi p\right) }\ln \cos
px<\ln \frac{\sin x}{x}<\tfrac{1}{3p^{2}}\ln \cos px  \label{Main}
\end{equation}%
if $p\in (0,\sqrt{5}/5]$. The inequalities (\ref{Main}) are reversed if $%
p\in \lbrack 1/2,1]$.
\end{lemma}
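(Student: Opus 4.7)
The strategy is to apply the L'Hospital-type monotonicity rule (Lemma A) together with the Biernacki--Krzy\.z power series criterion (Lemma B), so as to reduce the claim to a purely arithmetic monotonicity statement about a sequence indexed by $n$. Setting $f(x)=\ln(\sin x/x)$ and $g(x)=\ln\cos(px)$, one has $F_{p}=f/g$ with $f(0^{+})=g(0^{+})=0$; thus Lemma A reduces the question to the monotonicity of $f'/g'$ on $(0,\pi/2)$.

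Using Lemma C, I compute
\begin{equation*}
f'(x)=\cot x-\tfrac{1}{x}=-\sum_{n=1}^{\infty}\frac{2^{2n}|B_{2n}|}{(2n)!}\,x^{2n-1},\qquad g'(x)=-p\tan(px)=-\sum_{n=1}^{\infty}\frac{2^{2n}(2^{2n}-1)|B_{2n}|}{(2n)!}\,p^{2n}x^{2n-1}.
\end{equation*}
After cancelling the common factor $x$ and substituting $t=x^{2}$, both series become power series in $t$ with positive coefficients $a_{n},b_{n}$ satisfying $a_{n}/b_{n}=1/[(2^{2n}-1)\,p^{2n}]$. By Lemma B, $f'/g'$ (hence $F_{p}$) is strictly increasing on $(0,\pi/2)$ provided the sequence $c_{n}:=(2^{2n}-1)\,p^{2n}$ is decreasing in $n\ge 1$, and strictly decreasing provided $c_{n}$ is increasing.

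The key computation is
\begin{equation*}
\frac{c_{n+1}}{c_{n}}=p^{2}\left(4+\frac{3}{4^{n}-1}\right),
\end{equation*}
which is itself strictly decreasing in $n$, with maximum $5p^{2}$ at $n=1$ and infimum $4p^{2}$ as $n\to\infty$. Consequently $(c_{n})$ is decreasing for every $n\ge 1$ exactly when $5p^{2}\le 1$, i.e.\ $p\le\sqrt{5}/5$, and increasing for every $n\ge 1$ exactly when $4p^{2}\ge 1$, i.e.\ $p\ge 1/2$. This proves both monotonicity statements. For the inequalities (\ref{Main}), one evaluates the endpoint values of $F_{p}$: the Taylor expansions $\ln(\sin x/x)=-x^{2}/6+O(x^{4})$ and $\ln\cos(px)=-p^{2}x^{2}/2+O(x^{4})$ give $\lim_{x\to 0^{+}}F_{p}(x)=1/(3p^{2})$, while $F_{p}(\pi/2)=(\ln 2-\ln\pi)/\ln\cos(p\pi/2)$. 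Multiplying the resulting bounds on $F_{p}(x)$ by $\ln\cos(px)<0$ reverses them and yields (\ref{Main}) in both regimes.

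The main obstacle is the boundary case $p=\sqrt{5}/5$, where $c_{1}=c_{2}$ and so the coefficient ratio $a_{n}/b_{n}$ is only weakly (not strictly) increasing at $n=1$. One must verify that this single non-strict step does not destroy the strict monotonicity of $F_{p}$; this follows because $c_{n+1}/c_{n}<1$ strictly for every $n\ge 2$, so the tail of the power-series ratio contributes strict growth that is enough to make $A(t)/B(t)$ strictly increasing on the entire interval $(0,\pi/2)$. The boundary $p=1/2$ on the other side presents no such issue, since $c_{n+1}/c_{n}=1+\tfrac{3/4}{4^{n}-1}>1$ strictly for all $n\ge 1$.
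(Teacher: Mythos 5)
Your proof follows the paper's argument essentially verbatim: the same reduction via Lemma \ref{Lemma A} to the ratio $f'/g'$, the same Bernoulli-number expansions from Lemma \ref{Lemma C}, the same application of Lemma \ref{Lemma B} to the coefficient sequence $c_n=(2^{2n}-1)p^{2n}$ (the paper works with the equivalent difference $b_{n+1}/a_{n+1}-b_n/a_n=(4^{n+1}-1)p^{2n-2}\bigl(p^2-\tfrac14+\tfrac{3}{4(4^{n+1}-1)}\bigr)$ rather than your ratio $c_{n+1}/c_n$), and the same endpoint evaluation $F_p(0^+)=1/(3p^2)$, $F_p(\pi/2^-)=(\ln 2-\ln\pi)/\ln\cos(p\pi/2)$. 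Your explicit handling of the boundary case $p=\sqrt{5}/5$, where $c_1=c_2$ and the coefficient ratio is only weakly monotone at the first step, addresses a point the paper passes over silently, but otherwise the two arguments coincide.
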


\begin{proof}
For $x\in \left( 0,\pi /2\right) $, we define $f\left( x\right) =\ln \frac{%
\sin x}{x}$ and $g\left( x\right) =\ln \cos px$, where $p\in (0,1]$. Note
that $f\left( 0^{+}\right) =g\left( 0^{+}\right) =0$, then $F_{p}\left(
x\right) $ can be written as 
\begin{equation*}
F_{p}\left( x\right) =\frac{f\left( x\right) -f\left( 0^{+}\right) }{g\left(
x\right) -g\left( 0^{+}\right) }.
\end{equation*}%
Differentiation and using (\ref{2.1}) and (\ref{2.2}) yield 
\begin{equation*}
\frac{f^{\prime }\left( x\right) }{g^{\prime }\left( x\right) }=\allowbreak 
\frac{p\left( \frac{1}{x}-\cot x\right) }{\tan px}=\allowbreak \frac{%
\sum_{n=1}^{\infty }\frac{2^{2n}}{\left( 2n\right) !}|B_{2n}|x^{2n-1}}{%
\sum_{n=1}^{\infty }\frac{2^{2n}-1}{\left( 2n\right) !}%
p^{2n-2}2^{2n}|B_{2n}|x^{2n-1}}:=\allowbreak \frac{\sum_{n=1}^{\infty
}a_{n}x^{2n-1}}{\sum_{n=1}^{\infty }b_{n}x^{2n-1}},
\end{equation*}%
where 
\begin{equation*}
a_{n}=\frac{2^{2n}}{\left( 2n\right) !}|B_{2n}|\text{, \ }b_{n}=\frac{%
2^{2n}-1}{\left( 2n\right) !}p^{2n-2}2^{2n}|B_{2n}|\text{.}
\end{equation*}%
Clearly, if the monotonicity of $a_{n}/b_{n}$ is proved, then by Lemma \ref%
{Lemma B} it is deduced the monotonicity of $f^{\prime }/g^{\prime }$, and
then the monotonicity of the function $F_{p}$ easily follows from Lemma \ref%
{Lemma A}. Now we prove the monotonicity of $a_{n}/b_{n}$. Indeed,
elementary computation yields%
\begin{eqnarray*}
\frac{b_{n+1}}{a_{n+1}}-\frac{b_{n}}{a_{n}} &=&\left( 2^{2n+2}-1\right)
p^{2n}-\left( 2^{2n}-1\right) p^{2n-2} \\
&=&\left( 4^{n+1}-1\right) p^{2n-2}\left( p^{2}-\frac{1}{4}+\frac{3}{4\left(
4^{n+1}-1\right) }\right) ,
\end{eqnarray*}%
from which it is easy to obtain that for $n\in \mathbb{N}$ 
\begin{equation*}
\frac{b_{n+1}}{a_{n+1}}-\frac{b_{n}}{a_{n}}\left\{ 
\begin{array}{cc}
\leq 0 & \text{if }p^{2}<\frac{1}{5}, \\ 
> & \text{if }p^{2}\geq \frac{1}{4}.%
\end{array}%
\right.
\end{equation*}%
It is seen that $b_{n}/a_{n}$ is decreasing if $0<p\leq \sqrt{5}/5$ and
increasing if $1/2\leq p\leq 1$, which together with $a_{n}$, $b_{n}>0$ for $%
n\in \mathbb{N}$ leads to $a_{n}/b_{n}$ is strictly increasing if $0<p\leq 
\sqrt{5}/5$ and decreasing if $1/2\leq p\leq 1$.

By the monotonicity of the function $F_{p}$ and notice that 
\begin{equation*}
F_{p}\left( 0^{+}\right) =\tfrac{1}{3p^{2}}\text{ \ \ \ and \ \ \ }%
F_{p}\left( \tfrac{\pi }{2}^{-}\right) =\tfrac{\ln 2-\ln \pi }{\ln \left(
\cos \frac{1}{2}\pi p\right) },
\end{equation*}%
the inequalities (\ref{Main}) follow immediately.
\end{proof}

\begin{lemma}
\label{Lemma E}For $p\in \left( 0,1\right] $, let $f_{p}$ be the function
defined on $\left( 0,\pi /2\right) $ by \ref{f_p}).

(i) If $f_{p}\left( x\right) <0$ holds for all $x\in \left( 0,\pi /2\right) $
then $p\in (0,1/3]$.

(ii) If $f_{p}\left( x\right) >0$ for all $x\in \left( 0,\pi /2\right) $,
then $p\in \lbrack p_{0},1]$, where $p_{0}=0.347307245464...$ is the unique
root of equation%
\begin{equation}
f_{p}\left( \tfrac{\pi }{2}\right) \allowbreak =\ln \frac{2}{\pi }-\frac{1}{p%
}\ln \left( \cos \frac{1}{2}\pi p\right) =0  \label{f_p=0}
\end{equation}%
on $\left( 0,1\right] $.
\end{lemma}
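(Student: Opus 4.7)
My plan is to dispatch the two parts separately, each by a contrapositive argument exploiting the behaviour of $f_{p}$ at one endpoint of $(0,\pi/2)$: for (i) I would zoom in on $x\to 0^{+}$, while for (ii) I would zoom in on $x\to (\pi/2)^{-}$, there leveraging the monotonicity in $p$ of $(\cos px)^{1/p}$ from Kl\'en et al.\ \cite{Klen.JIA.2010} already cited in the introduction.

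For part (i), the standard Taylor expansions give
\begin{equation*}
\ln\frac{\sin x}{x} = -\frac{x^{2}}{6} - \frac{x^{4}}{180} - O(x^{6}), \qquad
\frac{1}{p}\ln\cos(px) = -\frac{p}{2}\,x^{2} - \frac{p^{3}}{12}\,x^{4} - O(x^{6}),
\end{equation*}
so that
\begin{equation*}
f_{p}(x) \;=\; \frac{3p-1}{6}\,x^{2} \;+\; \frac{15p^{3}-1}{180}\,x^{4} \;+\; O(x^{6}).
\end{equation*}
If $p>1/3$ the leading coefficient is strictly positive, hence $f_{p}(x)>0$ on some right neighbourhood of $0$, contradicting $f_{p}<0$ on $(0,\pi/2)$. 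Therefore $p\leq 1/3$, proving (i).

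For part (ii), the function $f_{p}$ extends continuously to $x=\pi/2$ (with value $+\infty$ if $p=1$), so the hypothesis $f_{p}>0$ on $(0,\pi/2)$ forces
\begin{equation*}
\phi(p) \;:=\; f_{p}\!\left(\tfrac{\pi}{2}\right) \;=\; \ln\frac{2}{\pi} \;-\; \frac{1}{p}\ln\cos\frac{p\pi}{2} \;\geq\; 0.
\end{equation*}
Evaluated at the point $x=\pi/2$ (which sits comfortably inside $\left(-\sqrt{27/5},\sqrt{27/5}\right)$), the already quoted decrease of $p\mapsto(\cos px)^{1/p}$ on $(0,1)$ shows, after taking logarithms, that $p\mapsto \tfrac{1}{p}\ln\cos(p\pi/2)$ is decreasing, whence $\phi$ is strictly increasing on $(0,1)$. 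Direct computation yields $\phi(0^{+})=\ln(2/\pi)<0$ (since $\tfrac{1}{p}\ln\cos(p\pi/2)=-\tfrac{p\pi^{2}}{8}+O(p^{3})\to 0$), while $\phi(p)\to+\infty$ as $p\to 1^{-}$ because $\cos(\pi/2)=0$. The intermediate value theorem then produces a unique root $p_{0}\in(0,1)$ of \eqref{f_p=0}, and $\phi(p)\geq 0$ iff $p\geq p_{0}$, giving $p\in[p_{0},1]$ as claimed.

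There is no serious obstacle here: both parts reduce to checking one endpoint and calling on already-established facts. The most delicate point is cosmetic, namely verifying that the cited Kl\'en monotonicity covers $x=\pi/2$ (it does, since $\pi/2<\sqrt{27/5}$), so that the reduction to the scalar function $\phi$ is legitimate. The explicit numerical value $p_{0}\approx 0.347307\ldots$ is then a numerical corollary of solving $\phi(p_{0})=0$ and plays no role in the qualitative statement.
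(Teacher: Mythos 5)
Your proof is correct and follows essentially the same route as the paper: necessity is read off from the endpoint behaviour of $f_p$ (the sign of the leading Taylor coefficient $\tfrac{p}{2}-\tfrac{1}{6}$ at $x=0$ for (i), the sign of $f_p(\pi/2)$ together with the Kl\'en monotonicity of $p\mapsto p^{-1}\ln\cos(p\pi/2)$ for (ii) and for the uniqueness of $p_0$). The only difference is cosmetic: the paper imposes both endpoint conditions in each part and intersects the resulting intervals, whereas you invoke only the binding condition in each case.
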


\begin{proof}
Firstly, we show that there is a unique $p_{0}\in \left( 0,1\right) $ to
satisfy (\ref{f_p=0}) such that $f_{p}\left( \tfrac{\pi }{2}\right) <0$ for $%
p\in \left( 0,p_{0}\right) $ and $f_{p}\left( \tfrac{\pi }{2}\right) >0$ for 
$p\in (p_{0},1]$. Indeed, as mentioned previous (see \cite[Theorem 2.3]%
{Klen.JIA.2010}), the function $p\mapsto p^{-1}\ln \left( \cos \frac{1}{2}%
\pi p\right) $ is decreasing on $\left( 0,1\right) $, and therefore $%
p\mapsto f_{p}\left( \tfrac{\pi }{2}\right) $ is increasing on $\left(
0,1\right) $. Since 
\begin{eqnarray*}
f_{1/3}\left( \tfrac{\pi }{2}\right) &=&\allowbreak \ln \tfrac{2}{\pi }-3\ln 
\tfrac{\sqrt{3}}{2}<0, \\
f_{1/2}\left( \tfrac{\pi }{2}\right) &=&\allowbreak \ln \tfrac{2}{\pi }-2\ln 
\tfrac{\sqrt{2}}{2}>0,
\end{eqnarray*}%
so the equation ((\ref{f_p=0}) has a unique root $p_{0}$ on $\left(
0,1\right) $ and $p_{0}\in \left( 1/3,1/2\right) $ such that $f_{p}\left( 
\tfrac{\pi }{2}\right) <0$ for $p\in \left( 0,p_{0}\right) $ and $%
f_{p}\left( \tfrac{\pi }{2}\right) >0$ for $p\in (p_{0},1]$. Numerical
calculation reveals that $p_{0}=0.347307245464...$.

Secondly, if inequality $f_{p}\left( x\right) <0$ holds for $x\in \left(
0,\pi /2\right) $, then we have%
\begin{equation*}
\left\{ 
\begin{array}{l}
\lim_{x\rightarrow 0^{+}}\frac{f_{p}\left( x\right) }{x^{2}}%
=\lim_{x\rightarrow 0^{+}}\frac{\ln \frac{\sin x}{x}-\frac{1}{p}\ln \cos px}{%
x^{2}}=\allowbreak \frac{1}{2}p-\frac{1}{6}\leq 0, \\ 
f_{p}\left( \tfrac{\pi }{2}\right) =\allowbreak \ln \tfrac{2}{\pi }-\frac{1}{%
p}\ln \left( \cos \frac{1}{2}\pi p\right) \leq 0.%
\end{array}%
\right.
\end{equation*}%
Solving the inequalities for $p$ yields 
\begin{equation*}
p\in (0,1/3]\cap (0,p_{0}]=(0,1/3].
\end{equation*}%
In the same way, if inequality $f_{p}\left( x\right) >0$ for all $x\in
\left( 0,\pi /2\right) $, then 
\begin{equation*}
p\in \left[ 1/3,1\right] \cap \lbrack p_{0},1]=[p_{0},1],
\end{equation*}%
which completes the proof.
\end{proof}

\section{Proofs of Main Results}

\begin{proof}[Proof of Theorem \protect\ref{Theorem Ma}]
(i) We first prove the second inequality of (\ref{Ma}) holds if and only if $%
q\in (0,1/3]$.

The necessity follows from Lemma \ref{Lemma E}. It remains to show that the
condition $q\in (0,1/3]$ is sufficient. Since $q\in (0,1/3]\subset (0,\sqrt{5%
}/5]$, by the second inequality of (\ref{Main}) it is obtained that 
\begin{equation*}
\ln \frac{\sin x}{x}<\tfrac{1}{3q^{2}}\ln \cos qx=\tfrac{1}{3q}\ln \left(
\cos qx\right) ^{1/q}\leq \ln \left( \cos qx\right) ^{1/q},
\end{equation*}%
which proves the sufficiency.

(ii) Now we show that the first inequality of (\ref{Ma}) holds if and only
if $p\in \left[ p_{0},1\right] $.

The necessity is due to Lemma \ref{Lemma E}. We prove the first inequality
of (\ref{Ma}) holds if $p\in \left[ p_{0},1\right] $. In fact, in view of $%
p_{0}\in (0,\sqrt{5}/5]$ it follows from the first inequality of (\ref{Main}%
) that 
\begin{equation*}
\ln \frac{\sin x}{x}>\tfrac{\ln 2-\ln \pi }{\ln \left( \cos \frac{1}{2}\pi
p_{0}\right) }\ln \cos p_{0}x=\tfrac{p_{0}\left( \ln 2-\ln \pi \right) }{\ln
\left( \cos \frac{1}{2}\pi p_{0}\right) }\ln \left( \cos p_{0}x\right)
^{1/p_{0}}=\ln \left( \cos p_{0}x\right) ^{1/p_{0}},
\end{equation*}%
where the last equality holds is due to $p_{0}$ is the unique root of
equation (\ref{f_p=0}). And, by the monotonicity of the function $%
p\rightarrow \left( \cos px\right) ^{1/p}$, we conclude that for $p\in
\lbrack p_{0},1]$ 
\begin{equation*}
\frac{\sin x}{x}>\left( \cos p_{0}x\right) ^{1/p_{0}}\geq \left( \cos
px\right) ^{1/p},
\end{equation*}%
which implies the sufficiency.

(iii) Lastly, put $p=1/3$ and $p_{0}$ in (\ref{Main}) lead to (\ref{Ma1})
and (\ref{Ma2}), respectively.
\end{proof}

\begin{proof}[Proof of Theorem \protect\ref{Theorem Mb}]
Differentiation and using (\ref{2.1}) and (\ref{2.2}) yield$\allowbreak $ 
\begin{eqnarray*}
f_{p}^{\prime }\left( x\right) &=&\allowbreak \left( \cot x-\tfrac{1}{x}%
\right) +\tan px \\
&=&-\sum_{n=1}^{\infty }\frac{2^{2n}}{\left( 2n\right) !}|B_{2n}|x^{2n-1}+%
\sum_{n=1}^{\infty }\frac{2^{2n}-1}{\left( 2n\right) !}%
p^{2n-1}2^{2n}|B_{2n}|x^{2n-1} \\
&=&\sum_{n=1}^{\infty }\frac{\left( 2^{2n}-1\right) 2^{2n}}{\left( 2n\right)
!}|B_{2n}|\left( p^{2n-1}-\frac{1}{2^{2n}-1}\right) x^{2n-1} \\
&:&=\sum_{n=2}^{\infty }c_{n}d_{n}x^{2n-1},
\end{eqnarray*}%
where 
\begin{eqnarray*}
c_{n} &=&\frac{\left( 2^{2n}-1\right) 2^{2n}}{\left( 2n\right) !}|B_{2n}|%
\tfrac{p^{2n-1}-\frac{1}{2^{2n}-1}}{p-\left( \frac{1}{2^{2n}-1}\right)
^{1/\left( 2n-1\right) }}>0, \\
d_{n} &=&p-h\left( n\right) \text{, \ \ \ }h\left( n\right) =\left( \tfrac{1%
}{2^{2n}-1}\right) ^{1/\left( 2n-1\right) }
\end{eqnarray*}%
for $n\geq 1$ and $p\in (0,1]$.

Considering the function $g:\left( 1/2,\infty \right) \mapsto \left(
0,\infty \right) $ defined by 
\begin{equation}
g\left( x\right) =\left( \frac{1}{2^{2x}-1}\right) ^{1/\left( 2x-1\right) },
\label{h(x)}
\end{equation}%
and differentiation leads to%
\begin{eqnarray*}
\frac{2\left( 2x-1\right) ^{2}}{g\left( x\right) }g^{\prime }\left( x\right)
&=&\ln \left( 2^{2x}-1\right) -\frac{\left( 2x-1\right) 2^{2x}\ln 2}{\left(
2^{2x}-1\right) }:=g_{1}\left( x\right) , \\
g_{1}^{\prime }\left( x\right) &=&\frac{\allowbreak 2^{2x+1}\ln ^{2}2}{%
\left( 2^{2x}-1\right) ^{2}}\left( 2x-1\right) >0,
\end{eqnarray*}%
which reveals that $g_{1}$ is increasing on $\left( 1/2,\infty \right) $,
and therefore $g_{1}\left( x\right) >g_{1}\left( 1/2^{+}\right) =0$. It
follows that $g^{\prime }\left( x\right) >0$, then, $g$ is increasing on $%
\left( 1/2,\infty \right) $, hence for $n\geq 1$ 
\begin{equation*}
\tfrac{1}{3}=g\left( 1\right) \leq g\left( n\right) \leq g\left( \infty
\right) =\lim_{n\rightarrow \infty }\left( \tfrac{1}{2^{2n}-1}\right)
^{1/\left( 2n-1\right) }=\allowbreak \tfrac{1}{2},
\end{equation*}%
and then, $d_{n}=p-g\left( n\right) \leq 0$ if $p\in (0,1/3]$ and $%
d_{n}=p-g\left( n\right) \geq 0$ if $p\in \left[ 1/2,1\right] $. Thus, if $%
p\in (0,1/3]$ then $f_{p}^{\prime }\left( x\right) <0$, that is, $f_{p}$ is
decreasing, and it is derived that%
\begin{equation*}
\ln \beta _{p}\left( \tfrac{\pi }{2}\right) =f_{p}\left( \tfrac{\pi }{2}%
^{-}\right) <f_{p}\left( x\right) <\lim_{x\rightarrow 0^{+}}f_{p}\left(
x\right) =0,
\end{equation*}%
which yields (\ref{Mb}).

Likewise, if $p\in \left[ 1/2,1\right] $ then $f_{p}^{\prime }\left(
x\right) >0$, then, $f_{p}$ is increasing, and (\ref{Mb}) is reversed, which
completes the proof.
\end{proof}

\section{Corollaries}

Putting $p=1/3$\ in Theorem \ref{Theorem Mb}, we have

\begin{corollary}
\label{Corollary Mb1}(i) For $x\in \left( 0,\pi /2\right) $%
\begin{equation}
\beta _{1/3}\left( \tfrac{\pi }{2}\right) \cos ^{3}\frac{x}{3}<\frac{\sin x}{%
x}<\cos ^{3}\frac{x}{3}  \label{Mb1}
\end{equation}%
with the best constants $\beta _{1/3}\left( \pi /2\right) =16\sqrt{3}/\left(
9\pi \right) =\allowbreak 0.980\,14...$ and $1$.

(ii) For $x\in \left( 0,\pi /4\right) $ the inequalities 
\begin{equation}
\beta _{1/3}^{\left( 1\right) }\left( \tfrac{\pi }{4}\right) \cos ^{3}\frac{x%
}{3}<\frac{\sin x}{x}<\cos ^{3}\frac{x}{3}  \label{Mb2}
\end{equation}%
hold, where $\beta _{1/3}\left( \pi /4\right) =16\left( 3\sqrt{3}-5\right)
/\pi =\allowbreak 0.999\,00...$ and $1$ are the best possible.
\end{corollary}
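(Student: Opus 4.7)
The plan is that Corollary \ref{Corollary Mb1} is a direct specialization of Theorem \ref{Theorem Mb} at $p=1/3$: since $1/3 \in (0,1/3]$, the theorem places us in the \emph{decreasing} regime of $f_p$, so the only work remaining is to identify the boundary values of $f_{1/3}$ in closed form.

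For part (i), I would invoke Theorem \ref{Theorem Mb} to get that $f_{1/3}$ is decreasing on $(0,\pi/2)$ with $\lim_{x\to 0^+} f_{1/3}(x) = 0$, so for $x \in (0,\pi/2)$
$$f_{1/3}(\pi/2^-) < f_{1/3}(x) < 0,$$
which, upon exponentiation, is exactly (\ref{Mb1}). The explicit constant then comes from evaluating $f_{1/3}(\pi/2) = \ln(2/\pi) - 3\ln\cos(\pi/6)$ and using $\cos(\pi/6) = \sqrt{3}/2$ to obtain $\beta_{1/3}(\pi/2) = 16\sqrt{3}/(9\pi)$. Sharpness of both constants follows because $f_{1/3}$ is strictly monotone, so the bounds are attained only in the limit at the two endpoints.

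For part (ii), the key observation is that $f_{1/3}$ being decreasing on all of $(0,\pi/2)$ is \emph{a fortiori} decreasing on the subinterval $(0,\pi/4)$, so the same sandwich works with $\pi/2$ replaced by $\pi/4$ in the evaluation of the lower endpoint. The one piece of genuine computation is the trigonometric simplification of
$$e^{f_{1/3}(\pi/4)} = \frac{2\sqrt{2}/\pi}{\cos^3(\pi/12)}.$$
Using $\cos(\pi/12) = (\sqrt{6}+\sqrt{2})/4$ together with $\cos^2(\pi/12) = (2+\sqrt{3})/4$ and multiplying gives $\cos^3(\pi/12) = \sqrt{2}(3\sqrt{3}+5)/16$; rationalizing the resulting $32/(\pi(3\sqrt{3}+5))$ by the conjugate $(3\sqrt{3}-5)$ (note $(3\sqrt{3})^2 - 5^2 = 2$) produces the advertised $\beta_{1/3}(\pi/4) = 16(3\sqrt{3}-5)/\pi$.

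There is no real obstacle here: all the heavy lifting (monotonicity of $f_p$ via the Bernoulli-series quotient and Lemmas \ref{Lemma A}--\ref{Lemma C}) was done in Theorem \ref{Theorem Mb}, and the only care needed is in the elementary radical arithmetic for $\cos^3(\pi/12)$ in part (ii).
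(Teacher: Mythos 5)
Your proposal is correct and follows exactly the paper's route: the paper derives Corollary \ref{Corollary Mb1} simply by ``putting $p=1/3$ in Theorem \ref{Theorem Mb}'', i.e.\ using that $f_{1/3}$ is strictly decreasing with $f_{1/3}(0^+)=0$ and exponentiating the endpoint values, which is precisely your argument. Your explicit radical computation of $\cos^3(\pi/12)=\sqrt{2}\,(3\sqrt{3}+5)/16$ and the conjugate rationalization yielding $16(3\sqrt{3}-5)/\pi$ is a correct filling-in of a detail the paper leaves unstated.
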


\begin{remark}
The second inequality of (\ref{Mb1}) holds for $x\in \left( 0,3\pi /2\right) 
$. In fact, we define%
\begin{equation*}
h\left( x\right) :=x-\left( \cos \frac{x}{3}\right) ^{-3}\sin x
\end{equation*}%
and differentiation yields 
\begin{equation*}
h^{\prime }\left( x\right) =1-\frac{\sin x}{\cos ^{4}\frac{x}{3}}\sin \frac{x%
}{3}-\frac{\cos x}{\cos ^{3}\frac{x}{3}}=\tan ^{4}\frac{x}{3}\allowbreak >0.
\end{equation*}%
Hence for $x\in \left( 0,3\pi /2\right) $ then $h\left( x\right) >h\left(
0\right) =0$, which implies that the second inequality of (\ref{Mb1}) holds
for $x\in \left( 0,3\pi /2\right) $.
\end{remark}

Thus, by replacing $x$ for $3x$ the second inequality of (\ref{Mb1}) and
next using duplication formula for sine function we have

\begin{corollary}
\label{Corollary Mb2}For $x\in \left( 0,\pi /2\right) $ 
\begin{equation}
\frac{\sin x}{x}\left( 4\cos ^{2}x-1\right) =\frac{\sin 3x}{x}<3\cos ^{3}x
\label{Mb3}
\end{equation}%
holds.
\end{corollary}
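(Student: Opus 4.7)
The plan is to leverage the Remark immediately preceding the corollary, which has already extended the second inequality of (\ref{Mb1}) from $(0,\pi/2)$ to the wider interval $(0,3\pi/2)$ via the derivative computation $h'(x)=\tan^{4}(x/3)>0$. That is, I may take as given the bound $\sin y<y\cos^{3}(y/3)$ for every $y\in(0,3\pi/2)$, and this is the only analytic input needed.

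First I would make the substitution $y=3x$ in this extended inequality. Since $x\in(0,\pi/2)$ forces $3x\in(0,3\pi/2)$, the substitution lies within the valid range and yields $\sin(3x)<3x\cos^{3}x$, which after dividing by $x$ is precisely the right-hand inequality $(\sin 3x)/x<3\cos^{3}x$ of (\ref{Mb3}).

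Second, I would verify the stated equality $(\sin x/x)(4\cos^{2}x-1)=(\sin 3x)/x$ through the triple-angle identity: $\sin 3x=3\sin x-4\sin^{3}x=\sin x\bigl(3-4\sin^{2}x\bigr)=\sin x\bigl(4\cos^{2}x-1\bigr)$, after which dividing by $x$ gives the first equality in (\ref{Mb3}). Combining this identity with the inequality from the previous paragraph closes the argument.

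There is no genuine obstacle; the real work was done in the Remark. The crucial point is the extension of the valid range from $(0,\pi/2)$ to $(0,3\pi/2)$, since without it the rescaling $y=3x$ would cover only $x\in(0,\pi/6)$ and would not establish the corollary on the whole interval $(0,\pi/2)$. Everything else is the triple-angle identity plus bookkeeping.
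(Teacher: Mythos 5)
Your proof is correct and follows exactly the paper's route: the paper likewise obtains the corollary by substituting $3x$ for $x$ in the second inequality of (\ref{Mb1}), relying on the Remark's extension of its validity to $\left(0,3\pi/2\right)$, and then applying the triple-angle identity $\sin 3x=\sin x\left(4\cos^{2}x-1\right)$ (which the paper loosely calls the ``duplication formula''). Your explicit observation that the extension to $\left(0,3\pi/2\right)$ is what makes the rescaling cover all of $\left(0,\pi/2\right)$ is exactly the key point.
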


Utilizing the second inequality of (\ref{Mb1}) holds for $x\in \left( 0,3\pi
/2\right) $ we also obtain

\begin{corollary}
\label{Corollary Mb4}For $x\in \left( 0,\pi /2\right) $ we have 
\begin{equation}
\frac{\sin x}{x}<\allowbreak \frac{3}{4}\frac{\left( \cos x+1\right) ^{2}}{%
2\cos x+1}.  \label{Mb4}
\end{equation}
\end{corollary}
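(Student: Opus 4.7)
The plan is to apply the extended form of inequality \eqref{Mb1} highlighted in the Remark, namely
\begin{equation*}
\tfrac{\sin t}{t} < \cos^{3}\tfrac{t}{3} \qquad \text{for } t\in(0,3\pi/2),
\end{equation*}
with the specific substitution $t=3x/2$. Since $x\in(0,\pi/2)$ yields $3x/2\in(0,3\pi/4)\subset(0,3\pi/2)$, this substitution is legitimate and produces
\begin{equation*}
\sin\tfrac{3x}{2} < \tfrac{3x}{2}\,\cos^{3}\tfrac{x}{2}.
\end{equation*}

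Next I would massage the left-hand side using elementary trigonometric identities. Applying the triple-angle formula $\sin 3y = \sin y\,(4\cos^{2}y-1)$ with $y=x/2$, and noting $4\cos^{2}(x/2)-1 = 2\cos x+1$, gives $\sin(3x/2)=\sin(x/2)(2\cos x+1)$. Combining with $\sin(x/2)=\sin x/(2\cos(x/2))$ converts the displayed inequality into
\begin{equation*}
\frac{\sin x\,(2\cos x+1)}{2\cos\tfrac{x}{2}} < \frac{3x}{2}\,\cos^{3}\tfrac{x}{2},
\end{equation*}
that is,
\begin{equation*}
\frac{\sin x}{x} < \frac{3\cos^{4}(x/2)}{2\cos x+1}.
\end{equation*}
Finally, the half-angle identity $\cos^{2}(x/2)=(1+\cos x)/2$ gives $\cos^{4}(x/2)=(1+\cos x)^{2}/4$, and substituting this yields exactly \eqref{Mb4}.

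There is no real obstacle here: the only nontrivial ingredient is the extended validity of the upper bound $(\cos(t/3))^{3}$ on $(0,3\pi/2)$, which has already been established in the Remark via the explicit derivative computation $h'(x)=\tan^{4}(x/3)>0$. Everything else is algebraic bookkeeping with the triple-angle and half-angle formulas, so the argument is short and self-contained.
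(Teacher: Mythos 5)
Your argument is correct and follows essentially the same route as the paper: both start from the extended bound $\sin(3x/2)/(3x/2)<\cos^{3}(x/2)$ furnished by the Remark and reduce it to \eqref{Mb4} by elementary identities (you via the triple-angle formula, the paper via product-to-sum after multiplying by $\cos(x/2)$ --- the same computation in different clothing). The division by $2\cos x+1$ is harmless since that quantity is positive on $(0,\pi/2)$, so the proof is complete.
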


\begin{proof}
Since the second inequality of (\ref{Mb1}) holds for $x\in \left( 0,3\pi
/2\right) $, we have 
\begin{equation*}
\frac{\sin \left( 3x/2\right) }{3x/2}<\cos ^{3}\frac{x}{2}.
\end{equation*}%
Multiplying both sides by $\cos \left( x/2\right) $ and next using "product
into sum" and half-angle formulas give us 
\begin{equation}
\frac{\sin 2x+\sin x}{3x}<\allowbreak \left( \frac{\cos x+1}{2}\right) ^{2},
\label{Mb41}
\end{equation}%
which, after applying double-angle formula and next dividing both sides by $%
\left( 2\cos x+1\right) /3$, is the desired inequality.
\end{proof}

Putting $p=1/2\,$in Theorem \ref{Theorem Mb}, we get

\begin{corollary}
\label{Corollary Mb5}For $x\in \left( 0,\pi /2\right) $, the inequalities 
\begin{equation}
\left( \cos \frac{x}{2}\right) ^{2}<\frac{\sin x}{x}<\beta _{1/2}\left( 
\tfrac{\pi }{2}\right) \left( \cos \frac{x}{2}\right) ^{2}  \label{Mb5}
\end{equation}%
hold, where $\beta _{1/2}\left( \pi /2\right) =\allowbreak 4/\pi $ and $1$
are the best possible constants.
\end{corollary}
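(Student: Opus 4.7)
The plan is to specialize Theorem~\ref{Theorem Mb} at $p=1/2$. Since $1/2 \in [1/2,1]$, Theorem~\ref{Theorem Mb} asserts that $f_{1/2}$ is increasing on $(0,\pi/2)$, and consequently the inequalities~(\ref{Mb}) reverse, yielding
\[
(\cos px)^{1/p} < \frac{\sin x}{x} < \beta_p\!\left(\tfrac{\pi}{2}\right)(\cos px)^{1/p}
\]
on $(0,\pi/2)$. Substituting $p=1/2$ turns the factor $(\cos px)^{1/p}$ into $(\cos(x/2))^2$, so after evaluating the leading constant on the right I recover~(\ref{Mb5}).

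Next I evaluate $\beta_{1/2}(\pi/2)$ from its defining formula $\beta_p(\pi/2)=2\pi^{-1}(\cos(p\pi/2))^{-1/p}$. At $p=1/2$ one has $\cos(p\pi/2)=\cos(\pi/4)=\sqrt{2}/2$, so $(\cos(p\pi/2))^{-1/p}=(\sqrt{2}/2)^{-2}=2$, giving $\beta_{1/2}(\pi/2)=4/\pi$ as claimed.

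For sharpness I would invoke the monotonicity of $f_{1/2}$ directly: since $f_{1/2}$ is continuous and strictly increasing on $(0,\pi/2)$ with endpoint values $f_{1/2}(0^+)=0$ and $f_{1/2}(\pi/2^-)=\ln(4/\pi)$, neither extremum is attained in the open interval. Hence no left constant strictly larger than $1$ and no right constant strictly smaller than $4/\pi$ can work uniformly on $(0,\pi/2)$, so both $1$ and $4/\pi$ are best possible.

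There is no genuine obstacle: the corollary is a direct specialization of Theorem~\ref{Theorem Mb}, and the only substantive arithmetic is the elementary simplification $(\cos(\pi/4))^{-2}=2$. The one thing warranting a moment of care is the direction of the reversal, namely confirming that in the range $p\in[1/2,1]$ the monotonicity flips $f_{1/2}$ from decreasing to increasing, which in turn flips~(\ref{Mb}) rather than leaving it unchanged.
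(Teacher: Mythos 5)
Your proposal is correct and follows exactly the paper's route: the paper obtains this corollary by the one-line specialization $p=1/2$ in Theorem~\ref{Theorem Mb}, using the reversal of (\ref{Mb}) for $p\in[1/2,1]$ and the evaluation $\beta_{1/2}(\pi/2)=\frac{2}{\pi}\bigl(\cos\frac{\pi}{4}\bigr)^{-2}=4/\pi$. Your added sharpness argument via the strict monotonicity of $f_{1/2}$ and its endpoint limits $0$ and $\ln(4/\pi)$ is the natural completion of that derivation.
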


\begin{remark}
The first inequality of (\ref{Mb5}) or (\ref{Qi}) is equivalent to $\tan
\left( x/2\right) >x/2$, which holds for $x/2\in \left( 0,\pi /2\right) $,
that is, $x\in \left( 0,\pi \right) $. The second one of (\ref{Mb1}) or (\ref%
{Klen}) holds for $x\in \left( 0,3\pi /2\right) $ due to Remark previous.
While the third one of (\ref{Klen}) holds for $x\in \left( -\infty ,\infty
\right) $, since 
\begin{equation*}
\cos ^{3}\frac{x}{3}-\frac{2+\cos x}{3}=-\frac{1}{3}\left( \cos \frac{x}{3}%
+2\right) \left( \cos \frac{x}{3}-1\right) ^{2}<0.
\end{equation*}%
Consequently, the value range of variable $x$ such that (\ref{Klen}) holds
can be extended to $\left( 0,\pi \right) $, which slightly improves (\ref%
{Klen}).$\allowbreak $
\end{remark}

\section{Applications}

As consequences of main results, we will establish some new inequalities in
this section. The following is a direct corollary of Theorem \ref{Theorem Mb}%
.

\begin{corollary}
We have 
\begin{equation}
\left( \frac{2}{\pi }\right) ^{p}\frac{1}{p}\tan \frac{p\pi }{2}%
<\int_{0}^{\pi /2}\left( \frac{\sin x}{x}\right) ^{p}dx<\int_{0}^{\pi
/2}\left( \cos px\right) =\frac{1}{p}\sin \frac{p\pi }{2}  \label{A1}
\end{equation}%
if $p\in (0,1/3]$. Inequalities (\ref{A1}) is reversed if $p\in \left[ 1/2,1%
\right] $.
\end{corollary}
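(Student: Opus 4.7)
The plan is to derive the corollary as a straightforward integral consequence of Theorem \ref{Theorem Mb}. I start from the double inequality
\begin{equation*}
\beta_{p}\bigl(\tfrac{\pi}{2}\bigr)\,(\cos px)^{1/p} < \frac{\sin x}{x} < (\cos px)^{1/p},\qquad p\in(0,1/3],
\end{equation*}
which holds on $(0,\pi/2)$ (with $\beta_{p}(\pi/2)=2\pi^{-1}(\cos\tfrac{p\pi}{2})^{-1/p}$), and raise everything to the $p$-th power. Since $p>0$ and all three quantities are positive, this preserves the inequality and yields
\begin{equation*}
\beta_{p}(\tfrac{\pi}{2})^{p}\cos(px) < \left(\frac{\sin x}{x}\right)^{p} < \cos(px).
\end{equation*}

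Next I integrate termwise over $(0,\pi/2)$. The two cosine integrals evaluate to
\begin{equation*}
\int_{0}^{\pi/2}\cos(px)\,dx = \frac{1}{p}\sin\frac{p\pi}{2},
\end{equation*}
which immediately gives the right-hand side of (\ref{A1}). For the left-hand constant I compute
\begin{equation*}
\beta_{p}(\tfrac{\pi}{2})^{p} = \left(\frac{2}{\pi}\right)^{p}\frac{1}{\cos\tfrac{p\pi}{2}},
\end{equation*}
so multiplying by $\tfrac{1}{p}\sin\tfrac{p\pi}{2}$ collapses to $\left(\tfrac{2}{\pi}\right)^{p}\tfrac{1}{p}\tan\tfrac{p\pi}{2}$, which is exactly the claimed lower bound. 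The strict inequalities survive integration because the integrands are continuous and the strict inequality holds on the entire interior.

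For the reversed version, I invoke the second assertion of Theorem \ref{Theorem Mb}: when $p\in[1/2,1]$, the double inequality (\ref{Mb}) reverses, and the same pointwise-to-integral argument (raising to the positive power $p$ and integrating on $(0,\pi/2)$) reverses both sides of (\ref{A1}) with the identical constants.

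There is essentially no obstacle here: the proof is a mechanical transfer of a pointwise bound to an integral bound, with the only small bookkeeping being the simplification $\beta_{p}(\tfrac{\pi}{2})^{p}\sin\tfrac{p\pi}{2}=\left(\tfrac{2}{\pi}\right)^{p}\tan\tfrac{p\pi}{2}$. The only thing worth mentioning explicitly in the write-up is that raising to the $p$-th power is monotone because $p>0$ and $\sin x/x,\cos px$ are positive on $(0,\pi/2)$ for every $p\in(0,1]$, so the direction of the inequality is preserved in the forward case and reversed in the reversed case.
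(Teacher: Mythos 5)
Your proof is correct and is exactly the derivation the paper intends: the paper offers no written proof, merely calling the statement ``a direct corollary of Theorem \ref{Theorem Mb}'', and your raising of (\ref{Mb}) to the $p$-th power followed by integration, with the simplification $\beta_{p}(\tfrac{\pi}{2})^{p}\cdot\tfrac{1}{p}\sin\tfrac{p\pi}{2}=(\tfrac{2}{\pi})^{p}\tfrac{1}{p}\tan\tfrac{p\pi}{2}$, is precisely that direct argument. The reversed case is handled correctly as well, since the reversal comes from the reversed pointwise inequality in Theorem \ref{Theorem Mb}, not from the (order-preserving) power map.
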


For the estimate for the sine integral defined by 
\begin{equation*}
\func{Si}\left( x\right) =\int_{0}^{x}\frac{\sin t}{t}dt,
\end{equation*}%
there has some results, for example, Qi \cite{Qi.12(4)(1996)} showed that 
\begin{equation*}
\allowbreak 1.\,\allowbreak 333\,3...=\frac{4}{3}<\func{Si}\left( \frac{\pi 
}{2}\right) <\frac{\pi +1}{3}=\allowbreak 1.\,\allowbreak 380\,5...\text{;}
\end{equation*}%
the following two estimations are due to Wu \cite{Wu.19(12)(2006)}, \cite%
{Wu.12(2)(2008)}: 
\begin{eqnarray*}
\allowbreak 1.\,\allowbreak 356\,9... &=&\frac{\pi +5}{6}<\func{Si}\left( 
\frac{\pi }{2}\right) <\frac{\pi +1}{3}=\allowbreak 1.\,\allowbreak 380\,5...%
\text{,} \\
1.\,\allowbreak 368\,8... &=&\frac{92-\pi ^{2}}{60}<\func{Si}\left( \frac{%
\pi }{2}\right) <\frac{8+4\pi }{15}=\allowbreak 1.\,\allowbreak 371\,1...%
\text{.}
\end{eqnarray*}%
Now we give a more better one.$\allowbreak $

\begin{corollary}
We have%
\begin{equation}
\allowbreak 1.\,\allowbreak 369\,6...=\tfrac{\left( 3\sqrt{3}-5\right)
\left( 2\pi +9\sqrt{3}+22\right) }{2\pi }<\func{Si}\left( \tfrac{\pi }{2}%
\right) <\tfrac{2\pi +9\sqrt{3}+22}{32}=\allowbreak 1.\,\allowbreak
371\,0....  \label{A2}
\end{equation}
\end{corollary}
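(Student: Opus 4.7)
The plan is to reduce $\func{Si}(\pi/2)$ to an integral over $(0,\pi/4)$ so that the sharp bounds from Corollary \ref{Corollary Mb1}(ii) (which live on that smaller interval) can be applied, and then to evaluate the resulting trigonometric integral in closed form.

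First I would use the double-angle identity $\sin(2x)=2\sin x\cos x$ together with the substitution $t=2x$ to rewrite
\begin{equation*}
\func{Si}\!\left(\tfrac{\pi}{2}\right)=\int_{0}^{\pi/2}\frac{\sin t}{t}\,dt=\int_{0}^{\pi/4}\frac{\sin(2x)}{x}\,dx=2\int_{0}^{\pi/4}\cos x\cdot\frac{\sin x}{x}\,dx.
\end{equation*}
This is the crucial reformulation: it presents $\func{Si}(\pi/2)$ as a positive weighted average of $(\sin x)/x$ over precisely the interval $(0,\pi/4)$ where the sharpest two-sided bound (\ref{Mb2}) is available.

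Next, since $\cos x>0$ on $(0,\pi/4)$, I would multiply (\ref{Mb2}) through by $2\cos x$ and integrate to obtain
\begin{equation*}
\beta_{1/3}\!\left(\tfrac{\pi}{4}\right)\cdot J<\func{Si}\!\left(\tfrac{\pi}{2}\right)<J,\qquad J:=2\int_{0}^{\pi/4}\cos x\,\cos^{3}\!\tfrac{x}{3}\,dx,
\end{equation*}
with $\beta_{1/3}(\pi/4)=16(3\sqrt{3}-5)/\pi$. It then remains to compute $J$ explicitly.

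To evaluate $J$, I would expand $\cos^{3}(x/3)=\tfrac14\bigl(3\cos(x/3)+\cos x\bigr)$ and apply the product-to-sum formulas to turn $\cos x\cos(x/3)$ into $\tfrac12(\cos(2x/3)+\cos(4x/3))$ and $\cos^{2}x$ into $\tfrac12(1+\cos 2x)$. This reduces the integrand to a linear combination of $1$, $\cos(2x/3)$, $\cos(4x/3)$, and $\cos 2x$, each of which integrates trivially. Evaluating the antiderivative at $x=\pi/4$ (where the arguments become $\pi/6$, $\pi/3$, $\pi/2$) yields the closed form
\begin{equation*}
J=\frac{2\pi+9\sqrt{3}+22}{32},
\end{equation*}
which is exactly the upper bound in (\ref{A2}); multiplying by $\beta_{1/3}(\pi/4)$ produces the lower bound. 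The only nontrivial step is the bookkeeping in the product-to-sum reduction, but there is no conceptual obstacle; the chosen substitution is what makes the sharper bound on $(0,\pi/4)$ applicable, which is what gives numerics superior to the earlier estimates cited before the corollary.
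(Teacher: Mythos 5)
Your proposal is correct and is essentially the paper's own argument: the paper writes $\frac{\sin x}{x}=\frac{\sin(x/2)}{x/2}\cos\frac{x}{2}$, applies (\ref{Mb2}) at $x/2\in(0,\pi/4)$, and integrates $\cos\frac{x}{2}\cos^{3}\frac{x}{6}$ over $(0,\pi/2)$, which is exactly your integral $J$ after the substitution $t=2x$. The key idea --- using the double-angle identity so that the sharper constant $\beta_{1/3}(\pi/4)=16(3\sqrt{3}-5)/\pi$ on $(0,\pi/4)$ becomes applicable --- and the closed-form value $\frac{2\pi+9\sqrt{3}+22}{32}$ are identical in both treatments.
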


\begin{proof}
By \ref{Corollary Mb2}, for $x\in \left( 0,\pi /2\right) $ we have%
\begin{equation*}
\beta _{1/3}\left( \tfrac{\pi }{4}\right) \cos ^{3}\frac{x}{6}<\frac{\sin 
\frac{x}{2}}{\frac{x}{2}}<\cos ^{3}\frac{x}{6},
\end{equation*}%
where $\beta _{1/3}\left( \pi /4\right) =16\left( 3\sqrt{3}-5\right) /\pi $,
and multiplying both sides by $\cos \left( x/2\right) $ leads to 
\begin{equation}
\beta _{1/3}\left( \tfrac{\pi }{4}\right) \cos ^{3}\frac{x}{6}\cos \frac{x}{2%
}<\frac{\sin x}{x}=\frac{\sin \frac{x}{2}}{\frac{x}{2}}\cos \frac{x}{2}<\cos
^{3}\frac{x}{6}\cos \frac{x}{2}.  \label{5.1}
\end{equation}%
Integrating both sides over $\left[ 0,\pi /2\right] $ yields 
\begin{equation*}
\beta _{1/3}\left( \tfrac{\pi }{4}\right) \int_{0}^{\pi /2}\cos ^{3}\frac{x}{%
6}\cos \frac{x}{2}dx<\int_{0}^{\pi /2}\frac{\sin x}{x}dx<\int_{0}^{\pi
/2}\cos ^{3}\frac{x}{6}\cos \frac{x}{2}dx.
\end{equation*}%
From the following%
\begin{equation*}
\int_{0}^{\pi /2}\cos \frac{x}{2}\cos ^{3}\frac{x}{6}dx=\frac{2\pi +9\sqrt{3}%
+22}{32}
\end{equation*}%
(\ref{A1}) follows.
\end{proof}

The Catalan constant \cite{Catalan.31(7)} 
\begin{equation*}
K=\sum_{n=0}^{\infty }\frac{\left( -1\right) ^{n}}{\left( 2n+1\right) ^{2}}%
=0.9159655941772190...
\end{equation*}%
is a famous mysterious constant appearing in many places in mathematics and
physics. Its integral representations contain the following \cite%
{Bradley.2001}%
\begin{equation*}
K=\int_{0}^{1}\frac{\arctan x}{x}dx=\frac{1}{2}\int_{0}^{\pi /2}\frac{x}{%
\sin x}dx.
\end{equation*}%
We present two estimations for $K$ below.

\begin{corollary}
We have 
\begin{eqnarray}
\allowbreak \tfrac{1}{8}\left( 4+3\ln 3\right) &<&K<\tfrac{3\sqrt{3}\pi }{128%
}\left( 4+3\ln 3\right) ,  \label{A31} \\
\tfrac{8\sqrt{3}}{9}\ln \left( 1+\sqrt{3}\right) -\tfrac{40-22\sqrt{3}}{3}
&<&K<\pi \allowbreak \left( \tfrac{9+5\sqrt{3}}{36}\ln \left( \sqrt{3}%
+1\right) -\tfrac{1+5\sqrt{3}}{48}\right) .  \label{A32}
\end{eqnarray}
\end{corollary}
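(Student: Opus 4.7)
The plan is to start from the integral representation $K=\tfrac{1}{2}\int_{0}^{\pi /2}\tfrac{x}{\sin x}\,dx$ and to feed in the two-sided bounds on $\sin x/x$ already established in the paper, inverting and integrating each.

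For (\ref{A31}) I would apply Corollary \ref{Corollary Mb1}(i): on $(0,\pi /2)$ one has $\beta _{1/3}(\pi /2)\cos ^{3}(x/3)<\sin x/x<\cos ^{3}(x/3)$. Reciprocating and integrating yields
\[
\int_{0}^{\pi /2}\sec ^{3}(x/3)\,dx\;<\;2K\;<\;\frac{1}{\beta _{1/3}(\pi /2)}\int_{0}^{\pi /2}\sec ^{3}(x/3)\,dx.
\]
Substituting $u=x/3$ and using the classical antiderivative $\int \sec ^{3}u\,du=\tfrac{1}{2}(\sec u\tan u+\ln |\sec u+\tan u|)$, with $\sec (\pi /6)=2/\sqrt{3}$ and $\tan (\pi /6)=1/\sqrt{3}$, gives $\int_{0}^{\pi /2}\sec ^{3}(x/3)\,dx=1+\tfrac{3}{4}\ln 3=\tfrac{1}{4}(4+3\ln 3)$. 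Dividing by $2$ and replacing $1/\beta _{1/3}(\pi /2)=9\pi /(16\sqrt{3})=3\sqrt{3}\pi /16$ produces (\ref{A31}) at once.

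For (\ref{A32}) I would recycle the sharper inequality (\ref{5.1}) that arose in the proof of (\ref{A2}), namely $\beta _{1/3}(\pi /4)\cos ^{3}(x/6)\cos (x/2)<\sin x/x<\cos ^{3}(x/6)\cos (x/2)$ on $(0,\pi /2)$. After inverting and integrating over $[0,\pi /2]$, the whole task reduces to evaluating in closed form
\[
I:=\int_{0}^{\pi /2}\frac{dx}{\cos ^{3}(x/6)\cos (x/2)}.
\]
The right substitution is $t=\tan (x/6)$ (\emph{not} the usual Weierstrass $t=\tan (x/2)$): then $\cos (x/6)=1/\sqrt{1+t^{2}}$, and the triple-angle identity $\cos 3\theta =4\cos ^{3}\theta -3\cos \theta $ forces the pleasant collapse $\cos (x/2)=(1-3t^{2})/(1+t^{2})^{3/2}$, so the integrand becomes the rational function $6(1+t^{2})^{2}/(1-3t^{2})$ on $[0,2-\sqrt{3}]$. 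Polynomial division gives $(1+t^{2})^{2}/(1-3t^{2})=-t^{2}/3-7/9+(16/9)/(1-3t^{2})$, and the logarithmic piece is $\int dt/(1-3t^{2})=(2\sqrt{3})^{-1}\ln |(1+\sqrt{3}t)/(1-\sqrt{3}t)|$; at $t=2-\sqrt{3}$ this logarithmic argument simplifies (multiply top and bottom by $2+\sqrt{3}$) to exactly $1+\sqrt{3}$. Collecting all pieces,
\[
I=\tfrac{44\sqrt{3}-80}{3}+\tfrac{16\sqrt{3}}{9}\ln (1+\sqrt{3}).
\]

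The lower estimate of (\ref{A32}) is then $K>I/2=\tfrac{8\sqrt{3}}{9}\ln (1+\sqrt{3})-\tfrac{40-22\sqrt{3}}{3}$. For the upper estimate I multiply $I/2$ by $1/\beta _{1/3}(\pi /4)=\pi /(16(3\sqrt{3}-5))$ and rationalize using $(3\sqrt{3})^{2}-5^{2}=2$: the identities $(11\sqrt{3}-20)/(3\sqrt{3}-5)=-(1+5\sqrt{3})/2$ and $\sqrt{3}/(3\sqrt{3}-5)=(9+5\sqrt{3})/2$ produce exactly the coefficients appearing on the right of (\ref{A32}). The main obstacle is the evaluation of $I$: one must spot the nonstandard substitution $t=\tan (x/6)$, which alone makes $\cos ^{3}(x/6)\cos (x/2)$ factor as $(1-3t^{2})/(1+t^{2})^{3}$ via the triple-angle identity. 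Everything afterwards is mechanical, though the triple rationalization with denominator $3\sqrt{3}-5$ in the upper bound has to be carried out carefully so that the stated coefficients emerge.
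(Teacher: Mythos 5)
Your proof is correct and follows the same route as the paper: both bounds come from reciprocating and integrating (\ref{Mb1}) for (\ref{A31}) and (\ref{5.1}) for (\ref{A32}) against the representation $K=\tfrac{1}{2}\int_{0}^{\pi /2}\tfrac{x}{\sin x}\,dx$, and your closed-form values of the two integrals agree with the paper's (your $I=\tfrac{44\sqrt{3}-80}{3}+\tfrac{16\sqrt{3}}{9}\ln (1+\sqrt{3})$ is exactly the paper's $\tfrac{8\sqrt{3}}{9}\bigl(2\ln (\sqrt{3}+1)-10\sqrt{3}+\tfrac{33}{2}\bigr)$). The only difference is in how that integral is evaluated --- you use the rationalizing substitution $t=\tan (x/6)$ with the triple-angle identity, while the paper substitutes $x=6t$ and quotes a lengthy antiderivative of $1/(\cos 3t\cos ^{3}t)$ --- which is a cosmetic, arguably cleaner, variation rather than a different approach.
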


\begin{proof}
By inequalities (\ref{Mb1}) we get for $\left( 0,\pi /2\right) $ 
\begin{equation*}
\frac{1}{\cos ^{3}\tfrac{x}{3}}<\frac{x}{\sin x}<\frac{9\pi }{16\sqrt{3}}%
\frac{1}{\cos ^{3}\tfrac{x}{3}},
\end{equation*}%
and integrating both sides over $\left[ 0,\pi /2\right] $ yields%
\begin{equation*}
\int_{0}^{\pi /2}\frac{dx}{\cos ^{3}\frac{x}{3}}<\int_{0}^{\pi /2}\frac{x}{%
\sin x}dx<\int_{0}^{\pi /2}\frac{dx}{\cos ^{3}\frac{x}{3}}.
\end{equation*}%
From the 
\begin{equation*}
\int_{0}^{\pi /2}\frac{dx}{\cos ^{3}\frac{x}{3}}=1+\allowbreak \frac{3\ln 3}{%
4}
\end{equation*}%
(\ref{A31}) follows.

Now we prove (\ref{A32}). From (\ref{5.1}) it is deduced that 
\begin{equation*}
\frac{1}{\cos \frac{x}{2}\cos ^{3}\frac{x}{6}}<\frac{x}{\sin x}<\frac{1}{%
\beta _{1/3}\left( \pi /4\right) }\frac{1}{\cos \frac{x}{2}\cos ^{3}\frac{x}{%
6}},
\end{equation*}%
and integrating over $\left[ 0,\pi /2\right] $ yields$\bigskip $%
\begin{equation*}
\int_{0}^{\pi /2}\frac{dx}{\cos \frac{x}{2}\cos ^{3}\frac{x}{6}}%
<\int_{0}^{\pi /2}\frac{x}{\sin x}dx<\frac{1}{\beta _{1/3}\left( \pi
/4\right) }\int_{0}^{\pi /2}\frac{dx}{\cos \frac{x}{2}\cos ^{3}\frac{x}{6}}.
\end{equation*}%
Direct computation gives 
\begin{eqnarray*}
\int \frac{dt}{\cos 3t\cos ^{3}t}\allowbreak &=&\tfrac{\sin t}{576\cos ^{5}t}%
-\tfrac{559\sin t}{1152\cos ^{3}t}-\tfrac{113\sin t}{384\cos t}+\tfrac{%
65\sin ^{3}t}{192\cos ^{3}t}+\tfrac{37\sin ^{3}t}{1152\cos ^{5}t} \\
&&-\allowbreak \tfrac{13\sin ^{5}t}{384\cos ^{5}t}+\allowbreak \tfrac{4\sqrt{%
3}}{27}\ln \tfrac{1+\sin \left( 2t-\frac{\pi }{6}\right) }{1-\sin \left( 2t+%
\frac{\pi }{6}\right) }+C,
\end{eqnarray*}%
where $C$ is a constant. Hence, 
\begin{equation*}
\int_{0}^{\pi /2}\frac{dx}{\cos \frac{x}{2}\cos ^{3}\frac{x}{6}}%
=6\int_{0}^{\pi /12}\frac{dt}{\cos 3t\cos ^{3}t}=\frac{8\sqrt{3}}{9}\left(
2\ln \left( \sqrt{3}+1\right) -10\sqrt{3}+\frac{33}{2}\right) ,
\end{equation*}%
it follows that (\ref{A32}) holds.
\end{proof}

The Schwab-Borchardt mean of two numbers $a\geq 0$ and $b>0$, denoted by $%
SB\left( a,b\right) $, is defined as \cite[Theorem 8.4]{Biernacki.9.1955}, 
\cite[3, (2.3)]{Carlson.78(1971)}%
\begin{equation*}
SB\left( a,b\right) =\left\{ 
\begin{array}{cc}
\frac{\sqrt{b^{2}-a^{2}}}{\arccos \left( a/b\right) } & \text{if }a<b, \\ 
a & \text{if }a=b, \\ 
\frac{\sqrt{a^{2}-b^{2}}}{\func{arccosh}\left( a/b\right) } & \text{if }a>b.%
\end{array}%
\right.
\end{equation*}%
The properties and certain inequalities involving Schwab-Borchardt mean can
be found in \cite{Neuman.14(2003)}, \cite{Neuman.JMI.2012.inprint}. We now
establish a new inequality for this mean.

\begin{corollary}
For $t>0$, we have 
\begin{equation}
\left( 4t^{2}-1\right) SB\left( t,1\right) \leq 3t^{3}.  \label{A4}
\end{equation}
\end{corollary}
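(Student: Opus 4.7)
The plan is to split the argument into three cases according to the piecewise definition of $SB(t,1)$ and reduce each case to a known trigonometric or hyperbolic inequality. The case $t = 1$ is immediate since $SB(1,1) = 1$ makes both sides equal $3$. For $0 < t < 1$ I would substitute $t = \cos x$ with $x = \arccos t \in (0, \pi/2)$, so that $SB(t,1) = \sqrt{1-t^2}/\arccos t = (\sin x)/x$; the desired inequality then becomes $(4\cos^2 x - 1)(\sin x)/x \leq 3\cos^3 x$, which is precisely the content of Corollary \ref{Corollary Mb2}. Observe that in the subrange $0 < t < 1/2$ the left-hand side is negative anyway, so the bound is trivial there.

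For $t > 1$ I would substitute $t = \cosh x$ with $x > 0$, giving $SB(t,1) = \sqrt{t^2-1}/\mathrm{arccosh}(t) = (\sinh x)/x$. The hyperbolic triple-angle identity $\sinh(3x) = (4\cosh^2 x - 1)\sinh x$ then reduces the target inequality to $\sinh(3x) \leq 3x \cosh^3 x$ on $(0,\infty)$, which is the hyperbolic analogue of the trigonometric bound used in the previous case and is not covered by any earlier result in the paper.

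To establish this hyperbolic inequality I would define $\phi(x) = 3x\cosh^3 x - \sinh(3x)$, with $\phi(0) = 0$, and compute $\phi'(x)$ using $\cosh(3x) = 4\cosh^3 x - 3\cosh x$ to collapse the cubic terms. A clean factorization should emerge, namely
\begin{equation*}
\phi'(x) = 9\cosh x \, \sinh x \, (x\cosh x - \sinh x).
\end{equation*}
Since $x\cosh x - \sinh x$ vanishes at $0$ and has derivative $x\sinh x > 0$ on $(0,\infty)$, it is strictly positive there; hence $\phi'(x) > 0$ and $\phi(x) > 0$ for all $x > 0$, as required.

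The main obstacle I anticipate is the case $t > 1$, since it lies outside the scope of the trigonometric machinery developed in the paper. The decisive step is recognizing that the substitution $t = \cosh x$ exposes an exact hyperbolic counterpart of Corollary \ref{Corollary Mb2}, and that the corresponding derivative factors through the classical inequality $x\cosh x > \sinh x$; once that factorization is spotted, the rest is routine.
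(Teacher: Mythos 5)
Your proposal is correct, and for $0<t<1$ it coincides with the paper's argument: both substitute $t=\cos x$ and invoke Corollary \ref{Corollary Mb2}, i.e.\ inequality (\ref{Mb3}). The genuine difference is in the case $t>1$. The paper does not prove the hyperbolic inequality $\sinh(3x)<3x\cosh^3 x$ from scratch; instead it quotes Lin's inequality $\frac{a-b}{\ln a-\ln b}<\bigl(\frac{a^{1/3}+b^{1/3}}{2}\bigr)^{3}$ for the logarithmic mean, sets $u=\ln\sqrt{a/b}$ to obtain $\frac{\sinh u}{u}<\cosh^{3}\frac{u}{3}$, and then applies the triple-angle identity exactly as you do. You replace the appeal to Lin's result by a direct monotonicity argument: your factorization $\phi'(x)=9\cosh x\,\sinh x\,(x\cosh x-\sinh x)$ is correct (using $\cosh 3x=4\cosh^{3}x-3\cosh x$ the cubic terms do collapse as you claim), and $x\cosh x-\sinh x>0$ on $(0,\infty)$ follows as you say. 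What your route buys is self-containment — no external citation is needed, and the computation is elementary; what the paper's route buys is brevity and the observation that the hyperbolic Jordan-type bound $\frac{\sinh u}{u}<\cosh^{3}\frac{u}{3}$ is literally equivalent to a classical mean inequality (logarithmic mean versus the power mean of order $1/3$), which places the result in a broader context. Your remark that the subrange $0<t<1/2$ is trivial because the left-hand side is negative is harmless but unnecessary, since (\ref{Mb3}) already holds on all of $(0,\pi/2)$.
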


\begin{proof}
For $t\in \left( 0,1\right) $, letting $\cos x=t$ in (\ref{Mb2}) we get 
\begin{equation*}
\frac{\sqrt{1-t^{2}}}{\arccos t}\left( 4t^{2}-1\right) <3t^{3}.
\end{equation*}%
For $t\in \left( 1,\infty \right) $, we use Lin's inequality \cite%
{Lin.79.1972} of positive numbers $a,b>0$ with $a\neq b$ 
\begin{equation*}
\frac{a-b}{\ln a-\ln b}<\left( \frac{a^{1/3}+b^{1/3}}{2}\right) ^{3}
\end{equation*}%
to deduce 
\begin{equation*}
\frac{\sinh u}{u}<\cosh ^{3}\frac{u}{3}
\end{equation*}%
by setting $u=\ln \sqrt{a/b}$. Letting $\cosh \left( u/3\right) =t$ and next
using duplication formula for sinh function lead us to 
\begin{equation*}
\frac{\sqrt{t^{2}-1}}{\func{arccosh}t}\left( 4t^{2}-1\right) <3t^{3},
\end{equation*}%
which proves the desired inequality.
\end{proof}

The Seiffert's mean \cite{Seiffert.42(1987)} of positive numbers $a,b>0$
with $a\neq b$ is defined by 
\begin{equation*}
P=P\left( a,b\right) =\frac{a-b}{2\arcsin \frac{a-b}{a+b}},
\end{equation*}%
and let $A=A\left( a,b\right) $ and $G=G\left( a,b\right) $ denote the
arithmetic mean and geometric mean. With $x=\arcsin \frac{a-b}{a+b}\in
\left( 0,\pi /2\right) $, we have 
\begin{equation*}
\frac{P}{A}=\frac{\sin x}{x}\text{, \ \ \ }\frac{G}{A}=\cos x.
\end{equation*}%
Thus Corollary \ref{Corollary Mb4} can be restated as follows.

\begin{corollary}
For $a,b>0$ with $a\neq b$, we have 
\begin{equation}
P<\allowbreak \frac{3}{4}\frac{\left( A+G\right) ^{2}}{2G+A}.  \label{A5}
\end{equation}
\end{corollary}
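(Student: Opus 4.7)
The plan is to reduce the inequality for $P$ directly to Corollary \ref{Corollary Mb4} via the substitution that makes $P/A$ and $G/A$ into $\sin x/x$ and $\cos x$, respectively. The remark preceding the statement already sets this up, so the proof is essentially a change of variables plus a routine algebraic rearrangement; no new analytic input should be needed.

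First, I would observe that $P$, $A$, and $G$ are all symmetric in $a$ and $b$, so without loss of generality I can assume $a>b>0$. Then $(a-b)/(a+b)\in(0,1)$, so setting $x=\arcsin\frac{a-b}{a+b}$ gives $x\in(0,\pi/2)$, which is exactly the range in which Corollary \ref{Corollary Mb4} is valid. From $\sin x=(a-b)/(a+b)$ I would read off
\begin{equation*}
\frac{\sin x}{x}=\frac{a-b}{(a+b)\arcsin\frac{a-b}{a+b}}=\frac{(a-b)/\bigl(2\arcsin\frac{a-b}{a+b}\bigr)}{(a+b)/2}=\frac{P}{A},
\end{equation*}
and from $\cos x=\sqrt{1-\sin^2 x}$ a short computation gives $\cos x=2\sqrt{ab}/(a+b)=G/A$.

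With these two identities in hand, I would apply Corollary \ref{Corollary Mb4} at the point $x$ and substitute $\cos x=G/A$ on the right-hand side:
\begin{equation*}
\frac{P}{A}=\frac{\sin x}{x}<\frac{3}{4}\frac{(\cos x+1)^{2}}{2\cos x+1}=\frac{3}{4}\frac{(G/A+1)^{2}}{2G/A+1}=\frac{3}{4}\frac{(G+A)^{2}}{A(2G+A)}.
\end{equation*}
Multiplying through by $A>0$ produces exactly the claimed bound $P<\tfrac{3}{4}(A+G)^{2}/(2G+A)$.

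There is really no obstacle here beyond care in the algebra: the two identities $P/A=\sin x/x$ and $G/A=\cos x$ are classical, and the passage from the trigonometric inequality to the mean inequality is a single rescaling. The only thing to be slightly careful about is the symmetry reduction to $a>b$ at the start, so that $x$ lies in the open interval where Corollary \ref{Corollary Mb4} applies, and noting that when $a=b$ the statement is trivial (and excluded by hypothesis).
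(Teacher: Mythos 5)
Your proof is correct and follows the paper's route exactly: the paper derives this corollary as a direct restatement of Corollary \ref{Corollary Mb4} using the substitution $x=\arcsin\frac{a-b}{a+b}$ together with the identities $P/A=\frac{\sin x}{x}$ and $G/A=\cos x$, which is precisely what you do. Your version simply spells out the symmetry reduction and the algebra that the paper leaves implicit.
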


\end{document}